\newtheorem{theorem}{Theorem}[section]
\newtheorem{proposition}[theorem]{Proposition}
\theoremstyle{definition}
\newtheorem{definition}[theorem]{Definition}
\newtheorem{remark}[theorem]{Remark}
\newcommand{\R}{\ensuremath{\mathbb{R}}}
\newcommand{\D}{\ensuremath{\mathcal{D}}}
\newcommand{\M}{\ensuremath{\mathcal{M}}}
\newcommand{\lp}{\left(}
\newcommand{\rp}{\right)}
\newcommand{\lc}{\left\{}
\newcommand{\rc}{\right\}}
\begin{document}
	
	\begin{frontmatter}
		
		
		
	\title{{\bf Nonholonomic Newmark method}}
		
		 \author[label1]{Alexandre Anahory Simoes}
		 \author[label2]{Sebasti\'an J. Ferraro} 
		 \author[label3]{Juan Carlos Marrero}
		 \author[label1]{David Mart\'in de Diego} 
		 \address[label1]{Instituto de Ciencias Matem\'aticas,  ICMAT
		 	c/ Nicol\'as Cabrera, n$^\textrm{o}$~13-15, Campus Cantoblanco, UAM
		 	28049 Madrid, Spain (e-mail: alexandre.anahory@icmat.es and david.martin@icmat.es)}
		 \address[label2]{Instituto de Matem\'atica (INMABB) -- Departamento de Matem\'atica, Universidad Nacional del Sur (UNS) -- CONICET, Bah\'ia Blanca, Argentina (e-mail: sferraro@uns.edu.ar)}
		\address[label3]{
			ULL-CSIC Geometr\'{\i}a Diferencial y Mec\'anica Geom\'etrica,
			Departamento de Matem\'aticas, Estad{\'\i}stica e IO, Secci\'on de
			Ma\-te\-m\'a\-ti\-cas y F{\'\i}sica, Universidad de la Laguna, La Laguna,
			Tenerife, Canary Islands, Spain (email: jcmarrer@ull.edu.es)}
		
		\begin{abstract}
		Using the nonholonomic exponential map, we generalize the well-known family of Newmark methods for nonholonomic systems. We give numerical examples including a test problem where the structure of reversible integrability responsible for good energy behaviour as described in \cite{modin} is lost. We observe that the composition of two Newmark methods is able to produce good energy behaviour on this test problem.
		\end{abstract}
	

		\begin{keyword}
			Nonholonomic mechanics \sep Numerical integration \sep nonholonomic exponential map \sep Newmark method
		\end{keyword}
        	\end{frontmatter}

\section{Introduction}
In numerical integration, one of the most widely used methods in nonlinear structure dynamics is without any doubt the Newmark family of numerical methods \cite{newmark}. 

As far as we know, the Newmark methods have not been extended to an important class of systems: nonholonomic systems. Briefly, a nonholonomic system is a mechanical system with external constraints on the velocities whose equations are obtained using the Lagrange-d'Alembert principle  (see \cite{Bloch}). These systems are present in a great variety of engineering and robotic environments as for instance in applications to wheeled vehicles and satellite dynamics. In this paper, we will consider only the case of linear velocity constraints since this is the case in most examples, but the extension of our nonholonomic Newmark method to the case of nonlinear constraints, explicitly time-dependent systems and nonholonomic systems with external forces is completely straightforward.

The case of linear velocity constraints is  specified by a {(in general, nonintegrable)} regular distribution ${\mathcal D}$ on the configuration space $Q$, or equivalently, by a vector subbundle $\tau_{\mathcal D}: {\mathcal D}\rightarrow Q$ of the tangent bundle $TQ$ with canonical inclusion $i_{\mathcal D}: {\mathcal D}\hookrightarrow TQ$. Therefore, the admissible curves $\gamma: I\subseteq {\mathbb R}\rightarrow Q$ must verify the following constraint equation 
\begin{equation*}\label{curve}
\gamma'(t)=\frac{d\gamma}{dt}(t)\in {\mathcal D}_{\gamma(t)} \hbox{    for all   } t\in I \, . 
\end{equation*}
The case of holonomic constraints occurs when  ${\mathcal D}$ is integrable or, equivalently, involutive. Observe that in this case, all the curves through a point $q\in Q$ satisfying the constraints must lie on the maximal integral submanifold of ${\mathcal D}$ through $q$.


In this paper, we construct nonholonomic Newmark methods in the case where $Q$ is ${\mathbb R}^n$ and discuss the possibility of composing Newmark methods to obtain higher-order methods. At the end, we test them in some nonholonomic problems. According to \cite{modin}, the reason why several numerical methods produce good energy behaviour is due to the fact that they preserve reversible integrability and most nonholonomic examples are precisely reversible integrable. The perturbed pendulum-driven CVT system is an example of an unbiased nonholonomic system since it is no longer reversible. Surprisingly, one of our methods, the composition of two Newmark methods, showed nearly preservation of energy.

The paper is structured as follows. In Section 2, we give a review of the Newmark method to integrate second-order differential equations and rewrite them in terms of a discretization of the exponential map. In Section 3, we review the definition of nonholonomic mechanics and of the nonholonomic exponential map which motivates the introduction of nonholonomic Newmark methods. In Proposition 3.5, we prove that Newmark methods with $\beta=\beta'=0$ are equivalent to a DLA method and in Proposition 3.7, we obtain a numerical method from the composition of lower order Newmark methods. In Section 4, we give three examples of nonholonomic systems including the perturbed pendulum-driven CVT system and in Section 5 we present our numerical results. Finally, in Section 6, we discuss the possibility of generalizing nonholonomic Newmark methods to general manifolds and, in particular, to a Lie group  (see  \cite{Krysl}).


 \section{Newmark method for explicit second-order differential equations}
Given a  second order differential equation
$
\frac{d^2q}{dt^2}=\Gamma(t, q, \dot{q})
$
the classical Newmark method is given by
\begin{equation}\label{Newmark:method}
	\begin{split}
		&\frac{q_{k+1}-q_k}{h}  = \dot{q}_k+h\left(\frac{1}{2}-\beta \right)\Gamma(t_k, q_k, \dot{q}_k)+h\beta \Gamma(t_{k+1}, q_{k+1}, \dot{q}_{k+1})\\
		&\frac{\dot{q}_{k+1}-\dot{q}_k}{h}  = \left(1-\gamma \right)\Gamma(t_k, q_k, \dot{q}_k)+\gamma \Gamma(t_{k+1}, q_{k+1}, \dot{q}_{k+1})
	\end{split}
\end{equation}
where $\gamma$ and $\beta$ are real numbers with $0\leq \gamma\leq 1$ and $0\leq \beta\leq 1/2$.
The Newmark method is second order accurate if and only if $\gamma=1/2$, otherwise it is only consistent. 
Moreover, this family of second order methods includes the trapezoidal rule ($\beta=1/4$) and the St\"ormer's method ($\beta=0$). In the latter case, the Newmark  method is simplified as follows: 
\begin{align*}
\frac{q_{k+1}-q_k}{h}&=\dot{q}_k+\frac{h}{2}\Gamma(t_k, q_k, \dot{q}_k)\\
\frac{\dot{q}_{k+1}-\dot{q}_k}{h}&=\frac{1}{2}\Gamma(t_k, q_k, \dot{q}_k)+\frac{1}{2}\Gamma(t_{k+1}, q_{k+1}, \dot{q}_{k+1})
\end{align*}

\subsection{Newmark method for Lagrangian systems}

The Newmark method \cite{newmark} is a classical time-stepping method that is very common in structural mechanical simulations. For simplicity, we consider a typical mechanical Lagrangian $L: T{\mathbb R}^n\longrightarrow {\mathbb R}$:
\begin{equation}\label{lag}
L(q, \dot{q})=\frac{1}{2}\dot{q} M\dot{q}^T-V(q)\, ,
\end{equation}
where $(q, \dot q)\in T{\mathbb R}^n\equiv {\mathbb R}^{2n}$, $M$ is a symmetric positive definite constant $n\times n$-matrix and $V$ is a potential function.
The corresponding 
Euler-Lagrange equations are
\begin{equation}\label{eq:NewmarkL}
\ddot{q}=-M^{-1}\nabla V(q)\,,
\end{equation}
where $\nabla$ denotes the gradient of the potential function.

The Newmark methods are widely used in simulations of such mechanical systems. In fact, they can be applied in an even more general context including external forces (cf. \cite{KMOW}).  In this case, fixing parameters  $\gamma$ and $\beta$,  equations (\ref{Newmark:method}) 
determine an integrator implicitly which gives $(q_{k+1}, \dot{q}_{k+1})$ in terms of $(q_{k}, \dot{q}_{k})$ by
\begin{align}\label{eq:NewmarkMethod}
q_{k+1}&=q_k+h\dot{q}_k+\frac{h^2}{2}\left( (1-2\beta) a_{k}+2\beta a_{k+1}\right)\\
\dot{q}_{k+1}&=\dot{q}_k+h\left( (1-\gamma) a_k+\gamma a_{k+1}\right)\, ,
\end{align}
where $a_k=-M^{-1}\nabla V(q_k)$ and  $a_{k+1}=-M^{-1}\nabla V(q_{k+1})$.

In contrast with other geometric integrators for Lagrangian systems (see \cite{MW2001}), the Newmark scheme is not especially designed to be symplectic and momentum preserving, but in \cite{KMOW} the authors show that the conservation of the symplectic form and the momentum occurs in a non-obvious way. In other words, the Newmark methods preserve a non-canonical perturbed symplectic form and a non-standard momentum.

\subsection{The Newmark method and the exponential map}

Given a second order differential equation $
\frac{d^2q}{dt^2}=\Gamma(t, q, \dot{q})
$  on Q, a point $q \in Q$ and a sufficiently small positive number $h > 0$,  we can construct the  exponential map of $\Gamma$ in $q$ at time $h$, i.e., a map $\hbox{exp}_{q,h}: U\subseteq T_{q}Q \rightarrow Q$. This map is defined taking for any vector $v\in T_qQ$ the unique trajectory of the second order differential equation with this initial condition,
that is the unique curve $\gamma: I\subset {\mathbb R}\rightarrow Q$ such that $\gamma(0)=q$, $\dot{\gamma}(0)=v$ and 
$\ddot{\gamma}(t)=\Gamma(\gamma(t), \dot{\gamma}(t))$ (see \cite{MMM3} and references therein). Then we define
	\[
		\hbox{exp}_{q,h} (v)=\gamma(h)
\]
A natural idea to derive a numerical method is to consider a discretization of the exponential map $\hbox{exp}^d_{q,h}: U\subseteq T_{q}Q \rightarrow Q$ that is, an approximation of the continuous exponential map. 
If $Q$ is a vector space, a common example of a discretization is the second order Taylor polynomial
\begin{equation}
	\text{exp}_{q,h}^d (v)=q+h v+\frac{h^2}{2}\Gamma(q, v)\; .
\end{equation}

\begin{definition}
	A discretization of the exponential map of a second order differential equation is a family of maps $\hbox{exp}^d_{q,h}:T_{q}Q \rightarrow Q$ depending on a parameter $h\in (-h_{0},h_{0})$ with $h_{0}>0$ such that
$\hbox{exp}^d_{q,0}(v_q)=q$, that is, it is a  constant map and the first and second derivatives with respect to $h$ satisfy
	$$\left. \frac{d}{dh} \right|_{h=0} \hbox{exp}^d_{q,h}(v) = v, \quad \left. \frac{d^{2}}{dh^{2}} \right|_{h=0} \hbox{exp}^d_{q,h}(v) = \Gamma(q,v).$$
\end{definition}

\begin{definition}
	The discrete flow $\Phi^h_d: TQ\rightarrow TQ$, $\Phi^h_d(q_k, v_k)=(q_{k+1}, v_{k+1})$ defined implicitly by the expression
	\begin{equation}\label{exponential:method}
	\begin{cases}
	q_{k+1} = \hbox{exp}_{q_k, h}^d (v_k) \\
	q_k = \hbox{exp}^d_{q_{k+1}, -h}(v_{k+1})
	\end{cases}
	\end{equation}
	is called the \textit{exponential method}.
\end{definition}

Observe that by the implicit function theorem, $\Phi^h_{d}$ is well-defined if $T_{v}\text{exp}_{q,h}^d$ is regular at $v=v_{k+1}$ for any $h\in (-h_{0},h_{0})$. In other words,
\[
\Phi^h_d (v_k)=\left[\hbox{exp}^d_{exp_{q_k, h}^d (v_k), -h}\right]^{-1}(q_k), \; \; \mbox{ for } v_k \in T_{q_k}Q.
\]
As we will see next, this is precisely the Newmark method with $\beta=0$ and $\gamma=1/2$.

In general, we can recover any Newmark method as a map 
$\Phi^h_d: TQ\rightarrow TQ$, $\Phi^h_d(q_k, v_k)=(q_{k+1}, v_{k+1})$
using the following discretizations of the exponential map depending of a parameter $\beta$ with $0\leq \beta\leq 1/2$: 
	\begin{equation}\label{exponential:method-1}
 \hbox{exp}^{\beta}_{q_k, h} (v_k)=q_k+hv_k+\frac{h^2}{2}\left(
(1-2\beta) \Gamma (q_k, v_k)+2\beta \Gamma (q_{k+1}, v_{k+1})\right)  
\end{equation}
and the Newmark method is rewritten as
	\begin{equation}\label{exponential:method-2}
\begin{cases}
q_{k+1} = \hbox{exp}^{\beta}_{q_k, h} (v_k) \\
q_k = \hbox{exp}^{\beta'}_{q_{k+1}, -h}(v_{k+1})
\end{cases}
\end{equation}
with parameters  $0\leq \beta, \beta'\leq 1/2$. That is
\begin{equation}\label{exponential:method-2}
\begin{split}
q_{k+1}&=q_{k}+hv_{k}+\frac{h^2}{2}
(1-2\beta) \Gamma (q_k, v_k) + h^2 \beta \Gamma (q_{k+1}, v_{k+1})\\
q_k&=q_{k+1}-hv_{k+1}+\frac{h^2}{2}
(1-2\beta') \Gamma (q_{k+1}, v_{k+1}) + h^2 \beta' \Gamma (q_k, v_k)
\end{split}
\end{equation}
Observe that that these methods are equivalent to the Newmark methods with parameters  $\beta$ and  $\gamma=(1+2\beta'-2\beta)/2$ in the expression (\ref{Newmark:method}) (in fact, if in (\ref{exponential:method-2}) we put $v_k = \dot{q}_k$ and $v_{k+1} = \dot{q}_{k+1}$ then we obtain (\ref{Newmark:method})).

\begin{remark}
	The discretization of the exponential map given in Equation (\ref{exponential:method-1}) should be understood as follows. Given a discretization  
	$\Phi^h_d: TQ\rightarrow TQ$ of the flow of a  second order differential equation  then we can define the discretization of the exponential map as
\[	
	\hbox{exp}^{\beta}_{q_k, h} (v_k)=q_k+hv_k+\frac{h^2}{2}\left(
	(1-2\beta) \Gamma (q_k, v_k)+2\beta \Gamma (\Phi^h_d (q_k, v_k))\right) \; .
	\]
	Thus, it is clear that it only depends on the  variables  $(q_k, v_k)$.
\end{remark}

\section{The nonholonomic Newmark method}

\subsection{Nonholonomic mechanics}
Consider a nonholonomic system on the configuration space $Q$ determined by a  Lagrangian function $L: TQ\rightarrow {\mathbb R}$ and nonholonomic constraints which are linear in the velocities given by a nonintegrable distribution ${\mathcal D}$. In coordinates, 
\begin{equation*}\label{LC}
\mu^{a}_{i}\lp q\rp\dot q^{i}=0,\hspace{2mm} m+1\leq a\leq n\, ,
\end{equation*}
where $\mbox{rank}\lp\mathcal{D}\rp=m\leq n$. The annihilator $\mathcal{D}^{\circ}$ is locally given by
\[
\mathcal{D}^{\circ}=\lc\mu^{a}=\mu_{i}^{a}(q)\,dq^{i};\hspace{1mm} m+1\leq a\leq n\rc\, ,
\]
where the 1-forms $\mu^{a}$ are independent.

The  equations of motion are completely determined by the   La\-gran\-ge-d'Alembert principle (\cite{Bloch}). This principle states that a curve $q: I\subset \R\rightarrow  Q$  is an admissible motion of the system if
\[
\delta\mathcal{J}=\delta\int^{T}_{0}L\lp q\lp t\rp, \dot q\lp t\rp\rp dt=0\, ,
\]
for all  variations satisfying  $\delta q\lp t\rp\in\mathcal{D}_{q\lp t\rp}$, $0\leq t\leq T$, $\delta q\lp 0\rp=\delta q\lp T\rp=0$. The velocity of the curve itself must also satisfy the constraints, that is,
$\mu_{i}^{a}(q(t))\,\dot q^{i}(t)=0$. 
 From the Lagrange-d'Alembert principle, we arrive at the well-known {\bf nonholonomic equations}
\begin{subequations}\label{nonholo-equations}
	\begin{align}
	\frac{d}{dt}\lp\frac{\partial L}{\partial\dot q^{i}}\rp-\frac{\partial L}{\partial q^{i}}&=\lambda_{a}\mu^{a}_{i}\, ,\label{Con-2}\\\label{Con-3}
	\mu_{i}^{a}(q)\,\dot q^{i}&=0\, ,
	\end{align}
\end{subequations}
where $\lambda_{a}$, $m+1\leq a \leq n$, is a set of Lagrange multipliers to be determined. 
The right-hand side of Equation (\ref{Con-2}) represents the force induced by the constraints  (reaction forces), while Equation (\ref{Con-3}) gives the linear velocity constraint condition. 

If we assume that the nonholonomic system is regular (see \cite{LMdD1996}), which is guaranteed if the Hessian matrix 
\[
(W_{ij})=\left( \frac{\partial^2 L}{\partial \dot{q}^i\partial \dot{q}^j}\right)
\]
is positive (or negative) definite, then the nonholonomic equations can be characterized as the solutions of a second order differential equation $\Gamma_{nh}$ restricted to the constraint space determined by ${\mathcal D}$. We can rewrite Equation (\ref{Con-2}) as a vector field on the tangent bundle 
$\Gamma_{nh}= \Gamma_L+\lambda_{a} Z^{a}$ 
where
\begin{align*}
\Gamma_L&=\dot{q}^i\frac{\partial}{\partial q^i}
+W^{ij}\left(\frac{\partial L}{\partial q^j}-\frac{\partial^2 L}{\partial \dot{q}^j\partial q^k}\dot{q}^k\right) \frac{\partial}{\partial \dot{q}^i}\\
Z^{a}&=W^{ij}\mu^{a}_j\frac{\partial}{\partial \dot{q}^i}
\end{align*}
where $(W^{ij})$ is the inverse matrix of $(W_{ij})$ (see \cite{LMdD1996,LM94}). Moreover, the Lagrange multipliers are completely determined and are given by the expression
\begin{equation*}
	\lambda_{a} = - \mathcal{C}_{a b}\Gamma_{L}(\mu^{b}_{i}\dot{q}^{i}),
\end{equation*}
where $(\mathcal{C}_{a b})$ is the inverse matrix of $(\mathcal{C}^{a b})=(\mu^{a}_jW^{ij}\mu^{b}_i)$. This matrix is invertible if and only if the nonholonomic system $(L, \mathcal{D})$ is regular.

\subsection{The discrete constraint space for nonholonomic systems}

Given a nonholonomic system $(L,\D)$, the \textit{nonholonomic exponential map} at $q\in Q$ and at time $h>0$ is the map
\begin{equation*}
\begin{split}
\text{exp}_{q,h}^{nh}: {\mathcal U}_q\subseteq\D_{q} & \longrightarrow Q \\
v_{q} & \mapsto c_{v_{q}}^{nh}(h)
\end{split}
\end{equation*}
sending each tangent vector $v_{q}$ in the distribution to the unique nonholonomic trajectory starting at $q$ with initial velocity $v_{q}$ evaluated at time $h$ (see \cite{AMM20} for more details; see also \cite{anahory1}). 

The fact that the space of initial velocities is restricted to the subspace $\D_{q}$, implies that the set of points reached by nonholonomic trajectories starting at $q$, that is, the image of $\text{exp}_{q,h}^{nh}$ is a submanifold of $Q$. Thus, we define the \textit{exact discrete constraint space} at $q$ as
\begin{equation}
\M_{q,h}^{nh} := \text{exp}_{q,h}^{nh}(\D_{q}).
\end{equation}
We are intentionally committing a slight abuse of notation in the definition of $\M_{q,h}^{nh}$, since not all vectors in $\D_{q}$ are guaranteed to generate a nonholonomic trajectory defined up to time $h$. But {if $h$ is sufficiently small,}  we can always consider a non-empty open subset of $\D_{q}$ generating such well-defined trajectories.

Moreover, it can be proven that $\text{exp}_{q,h}^{nh}$ is a diffeomorphism from an open subset $\mathcal{U}_{q}$ in $\D_{q}$ to $\M_{q,h}^{nh}$. Thus, in particular, the dimension of $\M_{q,h}^{nh}$ is precisely $\text{rank}(\D)$ (see \cite{AMM20, anahory1}).

This observation is particularly important, since it shows that if $q_{1}$ and $q_{0}$ are two sufficiently close points connected by a nonholonomic trajectory, then $q_{1}$ is restricted to live in the submanifold $\M_{q_{0},h}^{nh}$ with strictly lower dimension than $Q$ (in fact $\dim \M_{q_{0},h}^{nh}=m$). We will take this restriction into account when constructing numerical methods for nonholonomic systems. Though this procedure mimics the exact situation, we are also introducing a new source of error in the numerical integrator, since the discrete space must be approximated.

Assume that we have a nonholonomic system given by $(L, {\mathcal D})$ with nonholonomic dynamics given by 
\begin{equation}\label{sode-nh}
\Gamma_{nh}(q,v,\lambda)=\Gamma_L(q,v)+\lambda Z(q,v)
\end{equation}
and the Lagrange multipliers are derived from the nonholonomic constraints $\dot{c}(t)\in {\mathcal D}_{c(t)}$. 

The equations of motion of a nonholonomic system are completely determined by the nonholonomic exponential map. In fact the unique solution $\gamma: I\subset {\mathbb R}\rightarrow Q$ of the constrained SODE $\Gamma_{nh}$ with  initial condition
such that $\gamma(0)=q$, $\dot{\gamma}(0)=v_q \in {\mathcal D}_q$ is characterized by
\[
\gamma(h)=\text{exp}_{q,h}^{nh}(v_q)\; .
\]
From the properties of vector field flows, in this case $\Gamma_{nh}$, we have the following compatibility conditions
\[
\text{exp}_{q, s h}^{nh}(v_q)=\text{exp}_{\tilde{q},(s-1) h}^{nh}(\tilde{v}_{\tilde{q}})
\]
where $\tilde{q}=\gamma(h)=\text{exp}_{q,h}^{nh}(v_q)$, $\tilde{v}_{\tilde{q}}=\dot{\gamma}(h)$ and $s\in [0,1]$. In particular, for $s=0$ and $s=1$, we obtain the following system of equations 
\begin{align*}
\tilde{q}&=\text{exp}_{q, h}^{nh}(v_q)\\
q&=\text{exp}_{\tilde{q}, -h}^{nh}(\tilde{v}_{\tilde{q}})
\end{align*}
Observe that the final position and velocity satisfy the constraints $\tilde{q}\in \M_{q, h}^{nh}$ and $\tilde{v}_{\tilde{q}}\in
{\mathcal D}_{\tilde{q}}$.

\subsection{The nonholonomic Newmark method}\label{NhN_section}

These last properties are precisely the constraints that we will impose in order to obtain a nonholonomic version of the Newmark method: $(q_k, v_k)\rightarrow (q_{k+1}, v_{k+1})$. In particular, we need an appropriate discretization 
$$\text{exp}_{q, h}^{d, \beta, \lambda, \lambda'}: {\mathcal D_q}\rightarrow Q$$
of the nonholonomic exponential map depending on a parameter $0\leq \beta\leq 1/2$ and Lagrange multipliers $\lambda$ and $\lambda'$  which force the final point to satisfy a discretization of the exact discrete constraint space, denoted by ${\mathcal M}_{q_k,h}^d \subseteq Q$, with $\mbox{dim}\; \mathcal{M}_{q_k,h}^d = \mbox{rank}\lp\mathcal{D}\rp$, and the final velocity to belong to ${\mathcal D}$.
More concretely, we have the following definition
\begin{eqnarray*}
&&\text{exp}_{q_k, h}^{d, \beta, \lambda, \lambda'}(v_k)=
q_{k} + h v_{k} \\&&\qquad + \frac{h^{2}}{2}\left((1-2\beta)\Gamma_{nh}(q_{k},v_{k}, \lambda_{k}) + 2\beta \Gamma_{nh}(q_{k+1},v_{k+1}, \lambda'_{k+1})\right) 
\end{eqnarray*}
where  we are denoting the second component
$\Gamma_{nh}(q,v,\lambda)=\Gamma_L(q,v)+\lambda Z(q,v)$ of the vector field $\Gamma_{nh}$ with the same letter to avoid overloading notation. Therefore, our proposal of nonholonomic Newmark method is:
\begin{definition}
	The {\bf nonholonomic Newmark method} with parameters $(\beta, \beta')$, $0\leq \beta, \beta'\leq 1/2$ is the integrator $F_h^{\beta, \beta'}: \D\rightarrow \D$  implicitly given by
	\begin{align*}
		q_{k+1}&=\text{exp}_{q_k, h}^{d, \beta, \lambda, \lambda'}(v_k)\\
		q_k&=\text{exp}_{q_{k+1}, -h}^{d, \beta', \lambda', \lambda}(v_{k+1})\\
			q_{k+1} &\in {\mathcal M}_{q_k,h}^d \\
		v_{k+1} &\in \D_{q_{k+1}},
	\end{align*}
	or
	\begin{equation*}
	\begin{cases}
	q_{k+1} = q_{k} + h v_{k} + \frac{h^{2}}{2}\left((1-2\beta)\Gamma_{nh}(q_{k},v_{k}, \lambda_{k}) + 2\beta \Gamma_{nh}(q_{k+1},v_{k+1}, \lambda'_{k+1})\right) \\
	q_k = q_{k+1} - h v_{k+1} + \frac{h^{2}}{2}\left(2\beta'\Gamma_{nh}(q_{k},v_{k}, \lambda_{k}) + (1-2\beta') \Gamma_{nh}(q_{k+1},v_{k+1}, \lambda'_{k+1})\right) \\
	q_{k+1} \in {\mathcal M}_{q_k,h}^d \\
	v_{k+1} \in \D_{q_{k+1}}.
	\end{cases}
	\end{equation*}
\end{definition}

If the constraint distribution is given as the zero set of the functions $\phi^{a}:TQ \rightarrow \R$, i.e.,
\begin{equation*}
\phi^{a}(q_{k},v_{k})=0
\end{equation*}
and the discrete constraint space is given as the zero set of the functions $\Phi^{a}:Q\times Q \rightarrow \R$, i.e.,
\begin{equation*}
\Phi^{a}(q_{k},q_{k+1})=0,
\end{equation*}
then the discrete equations can be written as
\begin{equation*}
\begin{cases}
q_{k+1} = q_{k} + h v_{k} + \frac{h^{2}}{2}\left((1-2\beta)\Gamma_{nh}(q_{k},v_{k}, \lambda_{k}) + 2\beta \Gamma_{nh}(q_{k+1},v_{k+1}, \lambda'_{k+1})\right) \\
q_k = q_{k+1} - h v_{k+1} + \frac{h^{2}}{2}\left(2\beta'\Gamma_{nh}(q_{k},v_{k}, \lambda_{k}) + (1-2\beta') \Gamma_{nh}(q_{k+1},v_{k+1}, \lambda'_{k+1})\right) \\
\Phi^{a}(q_{k},q_{k+1})=0 \\
\phi^{a}(q_{k+1},v_{k+1})=0.
\end{cases}
\end{equation*}


\begin{remark}
In the case of holonomic constraints, that is, when the distribution $\mathcal{D}$ is integrable, the exact discrete constraint space $\mathcal{M}^{nh}_{q_k,h}$ is precisely the leaf ${\mathcal L}_{q_k}$ of the foliation by the point $q_k$ integrating the distribution and the constraint distribution is just the tangent space to each leaf (see \cite{AMM20, anahory1}). Therefore, the nonholonomic Newmark method in the holonomic case becomes (see \cite{zamm}):

 	\begin{equation*}
 	\begin{cases}
 	q_{k+1} = q_{k} + h v_{k} + \frac{h^{2}}{2}\left((1-2\beta)\Gamma_{nh}(q_{k},v_{k}, \lambda_{k}) + 2\beta \Gamma_{nh}(q_{k+1},v_{k+1}, \lambda'_{k+1})\right) \\
 	q_k = q_{k+1} - h v_{k+1} + \frac{h^{2}}{2}\left(2\beta'\Gamma_{nh}(q_{k},v_{k}, \lambda_{k}) + (1-2\beta') \Gamma_{nh}(q_{k+1},v_{k+1}, \lambda'_{k+1})\right) \\
 	q_{k+1} \in {\mathcal L}_{q_k} \\
 	v_{k+1} \in \D_{q_{k+1}}.
 	\end{cases}
 	\end{equation*}
	
\end{remark}

\begin{remark}
    A very important caveat is that when $\beta+\beta'=1/2$, the system of equations given by the nonholonomic Newmark method becomes ill-conditioned, at least for the case of mechanical Lagrangians. This is because the Jacobian matrix of the system with respect to the unknowns $(q_{k+1},v_{k+1},\lambda_k,\lambda'_{k+1})$ has two columns, those corresponding to the Lagrange multipliers, that are almost proportional. Each numerical step gives results with a large uncertainty, which accumulates rapidly. Therefore, this choice of parameters, which of course includes the case $\beta=\beta'=1/4$, should be avoided.
\end{remark}

\subsection{Discretizations of the exact discrete constraint space}\label{F_betabetaalpha_section}

Suppose that the nonholonomic constraints defining the distribution $\mathcal{D}$, as a submanifold of $TQ$, are
$$\phi^{a}(q,v)=\langle \mu^{a}(q),v \rangle $$
and, additionally, that the discrete constraints are obtained from the continuous ones in the following way
\begin{equation}\label{alpha:discretization}
	\Phi^{a}(q_{k},q_{k+1}) = \left\langle \mu^{a}\left((1-\alpha) q_{k} + \alpha q_{k+1}\right), \frac{q_{k+1}-q_{k}}{h} \right\rangle, \quad \alpha \in [0, 1].
\end{equation}
Alternatively, it would be also possible to consider
\begin{equation}\label{alpha:discretization-1}
\tilde{\Phi}^{a}(q_{k},q_{k+1}) = \left\langle (1-\alpha)\mu^{a}\left(q_{k} \right) + \alpha\mu^{a}\left(q_{k+1} \right), \frac{q_{k+1}-q_{k}}{h} \right\rangle, \quad \alpha \in [0, 1].
\end{equation}
Whenever it is clear which of the constraint discretizations we are using, we will simply denote the associated nonholonomic Newmark flow by $F_{h}^{\beta,\beta',\alpha}:\D \rightarrow \D$.

In this sense, let ${\mathcal M}_{q_k,h}^d \subseteq Q$ for each $q_{k} \in Q$ be the submanifold
$${\mathcal M}_{q_k,h}^d = \{q_{k+1} \ | \ \Phi^{a}(q_{k},q_{k+1}) = 0\}.$$

For deriving nonholonomic Newmark methods of order two, it would be interesting to assume the  following {\bf symmetry condition} in the discretization of the exact discrete constraint space: 
\[
q_{k+1}\in {\mathcal M}_{q_k,h}^d \Leftrightarrow q_{k}\in {\mathcal M}_{q_{k+1},h}^d
\]
For instance, this condition is satisfied  if $\alpha=1/2$ in the discretizations given by (\ref{alpha:discretization}) and  (\ref{alpha:discretization-1}).

As a direct consequence using the symmetry of the method  we obtain the following proposition (see \cite{hairer}). 
\begin{proposition}
	The nonholonomic Newmark methods with $\beta=\beta'$ and a symmetric discretization of the constraints are at least of order 2.
\end{proposition}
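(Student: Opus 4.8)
The plan is to prove the proposition via the standard criterion that a one-step method is of order $2$ as soon as it is \emph{symmetric} (i.e.\ $F_h^{-1} = F_{-h}$, equivalently $F_h \circ F_{-h} = \mathrm{id}$) and \emph{consistent} of order $\ge 1$, since the local error of a symmetric method has an expansion in \emph{even} powers of $h$ only, so an $O(h^2)$ error term is forced to vanish. This is exactly the argument in \cite{hairer}; the content of the proposition is to check that the two hypotheses in that theorem actually hold for the nonholonomic Newmark map $F_h^{\beta,\beta}$ with a symmetric constraint discretization.

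First I would record the adjoint/symmetry step. Writing the method as the system defining $(q_{k+1},v_{k+1},\lambda_k,\lambda'_{k+1})$ from $(q_k,v_k)$, I would compute the adjoint method $(F_h^{\beta,\beta'})^{*}=(F_{-h}^{\beta,\beta'})^{-1}$ by replacing $h$ by $-h$ and swapping the roles of $(q_k,v_k)$ and $(q_{k+1},v_{k+1})$ (and correspondingly of $\lambda_k$ and $\lambda'_{k+1}$). Comparing the two displayed equations in the definition of the nonholonomic Newmark method, this swap exchanges the first equation with the second precisely when $\beta'$ is interchanged with $\beta$; hence with $\beta=\beta'$ the adjoint system is literally the same system, so $(F_h^{\beta,\beta})^{*}=F_h^{\beta,\beta}$. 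The only extra point is the two algebraic constraints: $v_{k+1}\in\D_{q_{k+1}}$ is symmetric under the swap because the output velocity is always required to lie in $\D$ (just as the input does, the method being a map $\D\to\D$), and $q_{k+1}\in\M^d_{q_k,h}$ goes over to $q_k\in\M^d_{q_{k+1},h}$, which is equivalent to the original constraint exactly by the assumed symmetry condition on the discretized constraint space (e.g.\ $\alpha=1/2$ in \eqref{alpha:discretization} or \eqref{alpha:discretization-1}). So the full system, constraints included, is invariant, i.e.\ $F_h^{\beta,\beta}$ is symmetric.

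Next I would record consistency. Expanding $\mathrm{exp}^{d,\beta,\lambda,\lambda'}_{q_k,h}(v_k)=q_k+hv_k+\tfrac{h^2}{2}\Gamma_{nh}(q_k,v_k,\lambda_k)+O(h^3)$ and comparing with Definition~2.3 (the exponential-map discretization axioms) shows the discrete flow satisfies $q_{k+1}=q_k+hv_k+O(h^2)$ and $v_{k+1}=v_k+O(h)$, and that $\lambda_k\to\lambda_a(q_k,v_k)$, the exact Lagrange multiplier, as $h\to 0$ because the discrete constraints $\Phi^a$, $\phi^a$ degenerate to the continuous constraints and their derivative; hence $\Gamma_{nh}(q_k,v_k,\lambda_k)\to\Gamma_{nh}$ evaluated at the correct multiplier. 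Matching against the Taylor expansion of the exact nonholonomic flow $\mathrm{exp}^{nh}_{q_k,h}$ then gives a local error $O(h^2)$, so the method is consistent (order $\ge 1$). Combining this with the symmetry established above and invoking the even-power expansion of the local error of a symmetric method (\cite{hairer}), the $O(h^2)$ term of the error cannot appear, so the order is at least $2$, which is the claim.

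The main obstacle I expect is not the abstract order-doubling argument but the bookkeeping that the constrained system genuinely defines a smooth one-step map $F_h^{\beta,\beta}\colon\D\to\D$ to which the symmetric-method theorem applies: one must check the Jacobian with respect to the unknowns $(q_{k+1},v_{k+1},\lambda_k,\lambda'_{k+1})$ is nonsingular for small $h$, so that the implicit function theorem produces a smooth $F_h$ with a smooth dependence on $h$ near $h=0$ — this is exactly the point flagged in the Remark, where it fails when $\beta+\beta'=1/2$. Away from that locus the two Lagrange-multiplier columns are not almost-parallel and regularity of $\Gamma_L$, $Z$ together with regularity of the nonholonomic system (invertibility of $(\mathcal{C}^{ab})$) gives the needed invertibility; and one must make sure the constraint-discretization symmetry is used in the precise form $q_{k+1}\in\M^d_{q_k,h}\Leftrightarrow q_k\in\M^d_{q_{k+1},h}$ rather than any weaker statement, since that equivalence is what makes the adjoint system identical to the original one.
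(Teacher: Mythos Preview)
Your proposal is correct and follows exactly the approach the paper intends: the paper gives no detailed proof but simply asserts the result ``as a direct consequence using the symmetry of the method'' with a reference to \cite{hairer}, and you have supplied precisely those details---verifying that the adjoint of $F_h^{\beta,\beta'}$ is $F_h^{\beta',\beta}$ (so $\beta=\beta'$ together with the symmetry condition on ${\mathcal M}^d_{q_k,h}$ makes the method self-adjoint), checking consistency, and invoking the even-order expansion of symmetric methods. Your remarks on well-posedness of the implicit system away from $\beta+\beta'=1/2$ are also apposite and go slightly beyond what the paper states.
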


Thus, the nonholonomic Newmark method $F_{h}^{\beta,\beta,1/2}:\D \rightarrow \D$ associated to either discretizations is at least of order 2.

\subsection{Some interesting cases of nonholonomic Newmark methods}

The following result shows that, in some particular cases, the nonholonomic Newmark method corresponds with the DLA algorithm, one of the most classical geometric integrators for nonholonomic systems (see \cite{CM2001}).

\begin{proposition}\label{prop:Newmark0isDLA}
	Assume that we have a nonholonomic system defined by a Lagrangian of the type (\ref{lag}) and a distribution ${\mathcal D}$. 
	For any $\alpha\in [0, 1]$ and for $\beta=\beta'=0$, the nonholonomic Newmark method $F^{0, 0, \alpha}_{h}: \D\rightarrow \D$ is equivalent to DLA algorithm with discrete Lagrangian given by
	\begin{multline}\label{alpha:discrete:Lagrangian}
		L_{d}^{sym, \alpha} (q_{k},q_{k+1}) = h\left[  (1-\alpha) L\left(q_{k},\frac{q_{k+1}-q_{k}}{h}\right) + \alpha L\left(q_{k+1},\frac{q_{k+1}-q_{k}}{h}\right)\right]\\
		=h \left(\frac{q_{k+1}-q_k}{h}\right) M\left(\frac{q_{k+1}-q_k}{h}\right)^T
		-h(1-\alpha)V(q_k)-h\alpha V(q_{k+1})
	\end{multline}
	for each $\alpha$ and using either discretization \eqref{alpha:discretization} or \eqref{alpha:discretization-1}.
\end{proposition}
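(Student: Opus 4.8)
The plan is to reduce both integrators to a recursion on the positions $\{q_k\}$ alone and then match the two sets of equations term by term. First I would specialise the nonholonomic vector field to the mechanical Lagrangian \eqref{lag}: since $M$ is constant, $(W_{ij})=(M_{ij})$, so the acceleration part of $\Gamma_L$ equals $-M^{-1}\nabla V(q)$ and $Z^{a}=M^{-1}\mu^{a}$, whence
\[
\Gamma_{nh}(q,v,\lambda)=M^{-1}\!\left(-\nabla V(q)+\lambda_{a}\mu^{a}(q)\right),
\]
which in particular does not depend on $v$. Substituting this and $\beta=\beta'=0$ into the definition of the nonholonomic Newmark method, the first vector equation becomes $q_{k+1}=q_{k}+hv_{k}+\tfrac{h^{2}}{2}\Gamma_{nh}(q_{k},v_{k},\lambda_{k})$ and contains only $\lambda_{k}$, while the second becomes $q_{k}=q_{k+1}-hv_{k+1}+\tfrac{h^{2}}{2}\Gamma_{nh}(q_{k+1},v_{k+1},\lambda'_{k+1})$ and contains only $\lambda'_{k+1}$. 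Hence the step decouples: the first equation together with $\Phi^{a}(q_{k},q_{k+1})=0$ determines $(q_{k+1},\lambda_{k})$ from $(q_{k},v_{k})$, and the second together with $\phi^{a}(q_{k+1},v_{k+1})=0$ then determines $(v_{k+1},\lambda'_{k+1})$.

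Next I would eliminate the velocities. Solving the first equation for $v_{k}$ gives $v_{k}=\tfrac{q_{k+1}-q_{k}}{h}-\tfrac{h}{2}\Gamma_{nh}(q_{k},v_{k},\lambda_{k})$, and solving the second for $v_{k+1}$ gives $v_{k+1}=\tfrac{q_{k+1}-q_{k}}{h}+\tfrac{h}{2}\Gamma_{nh}(q_{k+1},v_{k+1},\lambda'_{k+1})$; these two identities are exactly the (constrained) discrete Legendre transforms attached to the scheme. Writing the second identity at index $k-1$ and equating it with the first at index $k$ eliminates $v_{k}$ and produces the two-point recursion
\[
M\,(q_{k+1}-2q_{k}+q_{k-1})+h^{2}\nabla V(q_{k})=\tfrac{h^{2}}{2}\,(\lambda_{k}+\lambda'_{k})_{a}\,\mu^{a}(q_{k}),
\]
together with $\Phi^{a}(q_{k},q_{k+1})=0$. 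Note that only the combination $\lambda_{k}+\lambda'_{k}$ survives, and that it is fixed uniquely by the discrete constraint.

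On the DLA side I would compute the partial derivatives of $L_{d}^{sym,\alpha}$ from \eqref{alpha:discrete:Lagrangian}, namely $D_{1}L_{d}^{sym,\alpha}(q_{k},q_{k+1})=-\tfrac{1}{h}M(q_{k+1}-q_{k})-h(1-\alpha)\nabla V(q_{k})$ and $D_{2}L_{d}^{sym,\alpha}(q_{k},q_{k+1})=\tfrac{1}{h}M(q_{k+1}-q_{k})-h\alpha\nabla V(q_{k+1})$. The decisive point is that in the discrete Lagrange--d'Alembert equation $D_{2}L_{d}^{sym,\alpha}(q_{k-1},q_{k})+D_{1}L_{d}^{sym,\alpha}(q_{k},q_{k+1})\in\mathcal D^{\circ}_{q_{k}}$ the $\alpha$-weights of the potential cancel, leaving $-\tfrac{1}{h}M(q_{k+1}-2q_{k}+q_{k-1})-h\nabla V(q_{k})\in\mathcal D^{\circ}_{q_{k}}$; scaling by $-h$, this is precisely the condition appearing in the Newmark two-point recursion above, and the discrete constraint submanifold $\M^{d}_{q_{k},h}=\{\,q_{k+1}\mid\Phi^{a}(q_{k},q_{k+1})=0\,\}$ used by $F_{h}^{0,0,\alpha}$ is literally the manifold defining the DLA constraints — valid verbatim for either discretisation \eqref{alpha:discretization} or \eqref{alpha:discretization-1}. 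Since both schemes are well posed for small $h$ (the Newmark Jacobian is nondegenerate as $\beta+\beta'=0<\tfrac12$, and $L_{d}^{sym,\alpha}$ is regular because $D_{1}D_{2}L_{d}^{sym,\alpha}=\tfrac1h M$ and $(L,\D)$ is regular), their equations have the same solution and the two integrators generate the same position sequences; the velocity in the Newmark output, obtained from the second equation and $\phi^{a}(q_{k+1},v_{k+1})=0$, is the corresponding discrete Legendre transform into $\D_{q_{k+1}}$, so the two $\D$-valued trajectories coincide.

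The main difficulty I expect is organisational rather than analytic: reconciling the two multiplier families $(\lambda_{k},\lambda'_{k+1})$ of the Newmark scheme with the single family of DLA multipliers — resolved by the observation that only $\lambda_{k}+\lambda'_{k}$ enters the position recursion and is determined uniquely by regularity — and verifying carefully that the $\alpha$-dependence enters both integrators \emph{only} through the shared discrete constraint $\Phi^{a}$, uniformly for the two admissible discretisations, together with the parallel check that the reconstructed velocities, and not merely the positions, agree.
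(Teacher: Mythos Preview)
Your proof is correct and follows essentially the same route as the paper: specialise $\Gamma_{nh}$ for the mechanical Lagrangian, combine the two Newmark update equations at consecutive steps to eliminate the velocity and obtain the central-difference recursion $M(q_{k+1}-2q_k+q_{k-1})+h^{2}\nabla V(q_k)\in\mathcal D^{\circ}_{q_k}$, and identify this with the DLA scheme for $L_d^{sym,\alpha}$, with the single DLA multiplier corresponding to the sum $\tfrac{h}{2}(\lambda_k+\lambda'_k)$. Your version is in fact more detailed than the paper's---you explicitly compute $D_1L_d^{sym,\alpha}$ and $D_2L_d^{sym,\alpha}$ and display the cancellation of $\alpha$ in the potential terms, and you address the matching of velocities via the discrete Legendre transforms, points the paper leaves implicit.
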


\begin{proof}
In this particular case
\[
\Gamma_L(q_k, v_k)=-M^{-1} \nabla V(q_k)\hbox{    and    } 
Z^{a}(q_k)=M^{-1} \mu^{a} (q_k)
\]
	From the equations of the nonholonomic Newmark method we obtain 
\begin{align*}
	q_k &= q_{k+1} - h v_{k+1} + \frac{h^{2}}{2} \Gamma_{nh}(q_{k+1},v_{k+1}, \lambda'_{k+1}) \\
	q_{k+2} &= q_{k+1} + h v_{k+1} + \frac{h^{2}}{2}\Gamma_{nh}(q_{k+1},v_{k+1}, \lambda_{k+1}) 
	\end{align*}
	adding both equations  
we immediately deduce that
	\begin{equation*}
		\frac{q_{k+2}-2q_{k+1}+q_{k}}{h^{2}} = \Gamma_{L}(q_{k+1},v_{k+1}) + \frac{\lambda_{k+1}+\lambda'_{k+1}}{2}Z(q_{k+1}).
	\end{equation*}
	or equivalently,
		\begin{equation*}
	\frac{q_{k+2}-2q_{k+1}+q_{k}}{h^{2}} = -M^{-1} \nabla V(q_{k+1})+ \frac{\lambda_{k+1}+\lambda'_{k+1}}{2} M^{-1} \mu(q_{k+1}).
	\end{equation*}
	
	These equations are equivalent to DLA integrator with respect to the discrete Lagrangian
	$
		L_{d}^{sym, \alpha} (q_{k},q_{k+1})$ and the constraints (\ref{alpha:discretization}): 
	with the relation between Lagrange multipliers being $$\Lambda = \frac{h(\lambda_{k+1}+\lambda'_{k+1})}{2},$$
	where $\Lambda$ is the Lagrange multiplier appearing in the DLA method \cite{CM2001}.
\end{proof}

Moreover, using the previous method, we can produce new numerical integrators using composition and the adjoint method (see \cite{hairer}, Chapter II.3). If $\Phi_{h}$ is a numerical method then the adjoint method is given by $\Phi_{h}^{*}=(\Phi_{-h})^{-1}$. An example of composition of numerical methods is shown in the next Proposition:
\begin{proposition}\label{compo:old}
	Consider the nonholonomic Newmark method with $\beta=\beta'=0$ and $\alpha=0$ denoted by $F^{0,0, 0}_{h}$, and its adjoint method $(F^{0,0, 0}_{h})^*$. The composition of these two methods
	\begin{equation}\label{composition:Newmark:old}
		\Psi_{h}=(F^{0,0, 0}_{h/2})^* \circ F^{0,0, 0}_{h/2}
	\end{equation}
	generates a second order method, using standard results on composition of adjoint methods.
\end{proposition}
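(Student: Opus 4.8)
The plan is to obtain the statement as a formal consequence of the standard theory of composition with the adjoint method (\cite{hairer}, Chapter~II.3), once we know that $F^{0,0,0}_{h}$ is a well-defined, consistent one-step integrator on $\D$. Three ingredients are needed: that $F^{0,0,0}_{h}$ has order at least $1$; that the composed map $\Psi_{h}$ is symmetric, i.e. $\Psi_{h}^{*}=\Psi_{h}$; and that a consistent symmetric one-step method necessarily has even order, hence order at least $2$.

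First I would check well-definedness and consistency of $F^{0,0,0}_{h}$. For $h$ small enough the defining equations determine $(q_{k+1},v_{k+1})$, together with the multipliers, as a smooth function of $(q_{k},v_{k})\in\D$ by the implicit function theorem, exactly as in the discussion following the definition of the exponential method; so $F^{0,0,0}_{h}:\D\rightarrow\D$ is a genuine one-step map (here the regularity of $(L,\D)$ and the non-degeneracy of the Jacobian of the discrete system, valid since $\beta+\beta'=0\neq 1/2$, are used). Consistency then follows because $\text{exp}^{d,0,\lambda,\lambda'}_{q,h}$ is a discretization of the nonholonomic exponential map in the sense of the definition of Section~2: its value at $h=0$ is $q$, its first $h$-derivative is $v$, and its second $h$-derivative reproduces $\Gamma_{nh}$ along the constrained dynamics, while the discrete constraint $\Phi^{a}$ reduces to the continuous constraint $\phi^{a}$ as $h\rightarrow 0$. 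Equivalently, by Proposition~\ref{prop:Newmark0isDLA}, $F^{0,0,0}_{h}$ is the DLA integrator for the discrete Lagrangian $L_{d}^{sym,0}$, a consistent discretization of the action, so the scheme is of order at least $1$; in general it is only first order, since the constraint discretization with $\alpha=0$ is not symmetric.

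Next I would establish the symmetry of $\Psi_{h}$. Writing $G_{h}=F^{0,0,0}_{h}$, so that $\Psi_{h}=(G_{h/2})^{*}\circ G_{h/2}$, and using that the adjoint of a composition reverses the order of the factors and replaces each by its adjoint, together with $(G^{*})^{*}=G$, one obtains
\begin{equation*}
\Psi_{h}^{*}=\bigl((G_{h/2})^{*}\circ G_{h/2}\bigr)^{*}=(G_{h/2})^{*}\circ\bigl((G_{h/2})^{*}\bigr)^{*}=(G_{h/2})^{*}\circ G_{h/2}=\Psi_{h}.
\end{equation*}
Hence $\Psi_{h}$ is symmetric; being a composition of two consistent methods it is itself consistent, and a consistent symmetric one-step method has even order (\cite{hairer}, Chapter~II.3), so $\Psi_{h}$ is of order at least $2$.

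The main point requiring care is not the order count, which is formal, but the verification that all the maps involved are well-defined one-step maps on the relevant discrete constraint manifolds for sufficiently small $h$: one must ensure that the intermediate state $G_{h/2}(q_{k},v_{k})$ lands in the domain on which the adjoint $(G_{h/2})^{*}$ --- essentially the $\beta=\beta'=0$ nonholonomic Newmark method with the time-reversed, $\alpha=1$ constraint discretization --- can be applied, and that $(G_{h/2})^{*}$ is regular there. Granting this, which follows from the regularity hypotheses already invoked for $F^{0,0,0}_{h}$, the remainder is the standard symmetry/even-order argument; I expect that checking this compatibility of domains (and, if desired, writing $\Psi_{h}$ explicitly so that the intermediate midpoint can be eliminated) will be the only genuinely technical step.
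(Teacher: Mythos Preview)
Your proposal is correct and follows exactly the same approach as the paper, which simply states that the result holds directly from the results in \cite{hairer}. Your version spells out the standard argument (consistency of $F^{0,0,0}_{h}$, symmetry of $\Psi_{h}$ from $(G^{*})^{*}=G$, and even order of symmetric methods) and anticipates the identification $(F^{0,0,0}_{h})^{*}=F^{0,0,1}_{h}$ that the paper establishes separately, but the substance is identical.
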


\begin{proof}
The last result holds directly from the results in \cite{hairer}.
\end{proof}

But we can say more, we may prove that $(F^{0,0, 0}_{h})^*=F^{0, 0, 1}_{h}$ and obtain:

\begin{proposition}\label{compo}
	Let $\beta=\beta'=0$. The nonholonomic Newmark methods with $\alpha=0, 1$, denoted by $F^{0,0, 0}_{h}$ and $F^{0, 0, 1}_{h}$ , respectively,  are adjoint methods. Therefore, the composition of these two methods
	\begin{equation}\label{composition:Newmark}
	\Psi_{h}=F^{0, 0, 1}_{h/2} \circ F^{0,0, 0}_{h/2}
	\end{equation}
	generates a second order method, using standard results on composition of adjoint methods.
\end{proposition}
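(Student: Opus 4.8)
The plan is to establish the identity $(F^{0,0,0}_{h})^{*}=F^{0,0,1}_{h}$ first; once this is in hand the second-order claim requires no new work, since then
\[
\Psi_{h}=F^{0,0,1}_{h/2}\circ F^{0,0,0}_{h/2}=(F^{0,0,0}_{h/2})^{*}\circ F^{0,0,0}_{h/2},
\]
and Proposition~\ref{compo:old} applies verbatim (equivalently, one invokes the standard fact that $\Phi_{h/2}^{*}\circ\Phi_{h/2}$ is symmetric together with the fact that a symmetric consistent one-step method has even order, hence order at least $2$; consistency of $F^{0,0,0}_{h}$ is part of Proposition~\ref{prop:Newmark0isDLA}, since it coincides with a DLA method).

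To identify the adjoint I would compute $(\Phi_{-h})^{-1}$ directly from the definition. Writing the defining relations of $F^{0,0,0}_{-h}$ for a generic step $(\bar q_{0},\bar v_{0})\mapsto(\bar q_{1},\bar v_{1})$, namely
\[
\bar q_{1}=\bar q_{0}-h\bar v_{0}+\frac{h^{2}}{2}\Gamma_{nh}(\bar q_{0},\bar v_{0},\bar\lambda_{0}),\qquad
\bar q_{0}=\bar q_{1}+h\bar v_{1}+\frac{h^{2}}{2}\Gamma_{nh}(\bar q_{1},\bar v_{1},\bar\lambda'_{1}),
\]
together with the discrete constraint $\langle\mu^{a}(\bar q_{0}),\bar q_{1}-\bar q_{0}\rangle=0$ (this is $\alpha=0$) and $(\bar q_{0},\bar v_{0}),(\bar q_{1},\bar v_{1})\in\D$, one then imposes $(\bar q_{0},\bar v_{0})=(q_{k+1},v_{k+1})$ and $(\bar q_{1},\bar v_{1})=(q_{k},v_{k})$: the $(-h)$-step must send the new state back to the old one, which is exactly what inverting the map does. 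After this substitution and the harmless renaming $\lambda'_{k+1}:=\bar\lambda_{0}$, $\lambda_{k}:=\bar\lambda'_{1}$ of the auxiliary multipliers, the two relations above become, line for line, the two SODE equations of the $\beta=\beta'=0$ nonholonomic Newmark method. This reflects the symmetry of the underlying exponential discretization when $\beta=\beta'$.

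The whole content of the identity therefore lies in the discrete constraint. After the substitution, $\langle\mu^{a}(\bar q_{0}),\bar q_{1}-\bar q_{0}\rangle=0$ becomes $\langle\mu^{a}(q_{k+1}),q_{k}-q_{k+1}\rangle=0$, and since neither the overall sign nor the positive factor $1/h$ affects the zero set, this is the same equation as
\[
\Phi^{a}(q_{k},q_{k+1})=\left\langle\mu^{a}(q_{k+1}),\frac{q_{k+1}-q_{k}}{h}\right\rangle=0,
\]
which is exactly the discretization~\eqref{alpha:discretization} evaluated at $\alpha=1$. The same manipulation applied to $\tilde\Phi^{a}$ gives the corresponding statement for \eqref{alpha:discretization-1}, and in fact the argument shows more generally that $(F^{0,0,\alpha}_{h})^{*}=F^{0,0,1-\alpha}_{h}$. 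One point I would be careful to spell out: the velocity constraints enter the adjoint only through the memberships $(q_{k},v_{k})\in\D$ and $(q_{k+1},v_{k+1})\in\D$, and since $F^{0,0,1}_{h}$ is by definition a map $\D\to\D$ these are precisely the conditions it carries, so nothing is lost. Collecting the pieces, the implicit system defining $(F^{0,0,0}_{h})^{*}$ is exactly the system defining $F^{0,0,1}_{h}$.

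I expect the only genuinely delicate point to be the bookkeeping of the adjoint step: keeping straight which of the two state pairs plays which role after $h\mapsto -h$, verifying that the two Lagrange-multiplier columns can be relabelled without altering the solution set, and confirming well-definedness, i.e. that $F^{0,0,0}_{-h}$ is invertible for small $h$ — which is the implicit-function-theorem argument already recorded after the definition of the exponential method. The constraint computation itself is a one-line symmetry observation, and the passage to order $2$ is a citation via Proposition~\ref{compo:old}.
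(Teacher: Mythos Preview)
Your proposal is correct and follows essentially the same route as the paper's own proof: both establish $(F^{0,0,0}_{h})^{*}=F^{0,0,1}_{h}$ by writing out the defining equations of $F^{0,0,0}_{h}$, swapping the roles of the two states under $h\mapsto -h$, and observing that only the discrete constraint changes, from evaluation of $\mu^{a}$ at $q_{k}$ to evaluation at $q_{k+1}$. You have merely spelled out in detail what the paper calls a ``straightforward verification,'' and your added remark that $(F^{0,0,\alpha}_{h})^{*}=F^{0,0,1-\alpha}_{h}$ is a natural byproduct of the same computation.
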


\begin{proof}
	Observe that the method $F^{0,0, 0}_{h}$ is given by the equations
	\begin{align*}
		q_{k+1}& = q_{k} + h v_{k} + \frac{h^{2}}{2} \Gamma_{nh}(q_{k},v_{k}, \lambda_{k}) \\
		q_{k} &= q_{k+1} - h v_{k+1} + \frac{h^{2}}{2}\Gamma_{nh}(q_{k+1},v_{k+1}, \lambda'_{k+1}) \\
		v_{k+1}&\in \D_{q_{k+1}}\\
		0&=	\langle\mu^{a}\left(q_{k}\right), \frac{q_{k+1}-q_{k}}{h}\rangle
	\end{align*}
        It is a straightforward verification that its adjoint method $(F^{0,0, 0}_{h})^{*}$ is given by the same equations except that the last one becomes $0=	\langle\mu^{a}\left(q_{k+1}\right),\frac{q_{k+1}-q_{k}}{h}\rangle$. This means that $(F^{0,0, 0}_{h})^*=F^{0, 0, 1}_{h}$ and the result follows.
\end{proof}

 Consider the Newmark methods with $\beta=\beta'=0$ and a Lagrangian of the type
\begin{equation*}
L(q, \dot{q})=\frac{1}{2}\dot{q} M\dot{q}^T-V(q)\, .
\end{equation*}
 Suppose  that we discretize the constraint space using the parameter $\alpha=0$, i.e.,
	$${\mathcal M}_{q_k,h}^d = \left\{q_{k+1} \ | \ \left\langle \mu^{a}\left(q_{k}\right), \frac{q_{k+1}-q_{k}}{h}\right\rangle = 0\right\}.$$
	Then from the equation
	\[
	q_{k+1} = q_{k} + h v_{k} + \frac{h^{2}}{2} \Gamma_{nh}(q_{k},v_{k}, \lambda_{k}) ,
	\]
	 we explicitly obtain the Lagrange multiplier $\lambda_k$: 
	\begin{equation*}
		\lambda_{k} = \frac{\langle\mu(q_{k}), M^{-1}\nabla V(q_k)\rangle}{\|\mu(q_{k})\|_M^{2}}
	\end{equation*}
	where 
	\[
	\|\mu(q_{k})\|_M = \sqrt{ \mu^a_i(q_k)M^{ij}\mu^b_j(q_k) }\; .
	\]
	In consequence we explicitly derive $q_{k+1}$ as
	\[
	q_{k+1} = q_{k} + h v_{k} + \frac{h^{2}}{2} \left(-M^{-1}\nabla V(q_k)
	+\frac{\langle\mu(q_{k}), M^{-1}\nabla V(q_k)\rangle}{\|\mu(q_{k})\|_M^{2}}M^{-1}\mu(q_k)\right)
	\] 
	
	Then, applying the co-vector $\mu(q_{k+1})$ to the second equation 
	\[
	q_{k} = q_{k+1} - h v_{k+1} + \frac{h^{2}}{2}\Gamma_{nh}(q_{k+1},v_{k+1}, \lambda'_{k+1}) 
	\]
	 we obtain
	\begin{equation*}
		\lambda'_{k+1} = -\frac{2}{h\|\mu(q_{k+1})\|^{2}}\left\langle \mu\left(q_{k+1}\right), \frac{q_{k+1}-q_{k}}{h}\right\rangle + \frac{\langle\mu(q_{k+1}), M^{-1}\nabla V(q_{k+1})\rangle}{\|\mu(q_{k+1})\|^{2}}
	\end{equation*}
	and in consequence we also derive explicitly $v_{k+1}$.  
	\begin{proposition}\label{explict-00}
		The nonholonomic Newmark method $F_h^{0,0,0}: \D\rightarrow \D$ is completely explicit for Lagrangians of the type (\ref{lag}) and constraints of the type $\alpha=0$. 
	\end{proposition}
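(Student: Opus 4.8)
The plan is to follow exactly the elimination sketched in the lines preceding the statement, organizing it as a sequential resolution of the four unknowns $(q_{k+1},v_{k+1},\lambda_{k},\lambda'_{k+1})$ that define one step of $F_h^{0,0,0}$, and to make explicit use of the special feature of the $\alpha=0$ discretization: the discrete constraint $\Phi^{a}(q_{k},q_{k+1})=\langle\mu^{a}(q_{k}),(q_{k+1}-q_{k})/h\rangle$ involves the one‑form $\mu^{a}$ evaluated only at the \emph{already known} point $q_{k}$, and therefore depends \emph{affinely} on $q_{k+1}$.

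First I would contract the first defining equation
\[
q_{k+1}=q_{k}+hv_{k}+\tfrac{h^{2}}{2}\bigl(\Gamma_{L}(q_{k},v_{k})+\lambda_{k,a}Z^{a}(q_{k})\bigr)
\]
with each $\mu^{b}(q_{k})$. Since $v_{k}\in\D_{q_{k}}$ gives $\langle\mu^{b}(q_{k}),v_{k}\rangle=0$ and the $\alpha=0$ constraint gives $\langle\mu^{b}(q_{k}),q_{k+1}-q_{k}\rangle=0$, and since for the Lagrangian (\ref{lag}) one has $\Gamma_{L}(q_{k},v_{k})=-M^{-1}\nabla V(q_{k})$ and $Z^{a}(q_{k})=M^{-1}\mu^{a}(q_{k})$, all terms involving $q_{k+1}$ and $v_{k}$ drop and one is left with the linear system $0=-\langle\mu^{b}(q_{k}),M^{-1}\nabla V(q_{k})\rangle+\lambda_{k,a}\,\mathcal{C}^{ab}$, where $\mathcal{C}^{ab}=\mu^{a}_{i}M^{ij}\mu^{b}_{j}$. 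Regularity of the nonholonomic system means $(\mathcal{C}^{ab})$ is invertible, so $\lambda_{k}$ is given by a closed formula in $q_{k}$ alone (the displayed formula for $\lambda_{k}$), and substituting it back into the first equation produces $q_{k+1}$ explicitly as a function of $(q_{k},v_{k})$.

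Next, with $q_{k+1}$ now known, I would repeat the trick on the second defining equation
\[
q_{k}=q_{k+1}-hv_{k+1}+\tfrac{h^{2}}{2}\Gamma_{nh}(q_{k+1},v_{k+1},\lambda'_{k+1}),
\]
contracting with each $\mu^{b}(q_{k+1})$: the term $\langle\mu^{b}(q_{k+1}),v_{k+1}\rangle$ vanishes because we impose $v_{k+1}\in\D_{q_{k+1}}$, so once more only known quantities and the unknown multiplier survive, and the same invertible matrix $(\mathcal{C}^{ab})$ evaluated at $q_{k+1}$ appears; hence $\lambda'_{k+1}$ is determined explicitly (the displayed formula for $\lambda'_{k+1}$). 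Substituting this value back into the second equation and solving the remaining affine relation for $v_{k+1}$ yields $v_{k+1}$ explicitly, and one checks a posteriori that $v_{k+1}\in\D_{q_{k+1}}$ holds automatically by the very way $\lambda'_{k+1}$ was chosen. Chaining these four explicit assignments shows $F_h^{0,0,0}\colon\D\to\D$ is an explicit map.

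I do not expect a genuine obstacle. The only delicate point is to verify that each contraction really eliminates every unknown except the relevant Lagrange multiplier; this rests on two facts already available: (i) the $\alpha=0$ constraint is affine in $q_{k+1}$ with coefficient one‑form evaluated at the known point $q_{k}$ — precisely what the choice $\alpha=0$ buys, and what would fail for the symmetric $\alpha=1/2$ discretization, where $\mu^{a}$ is evaluated at $(1-\alpha)q_{k}+\alpha q_{k+1}$ and the system stays implicit — and (ii) the invertibility of $(\mathcal{C}^{ab})$, which is exactly the regularity hypothesis recalled earlier. Beyond these, everything is routine linear algebra with the constant matrix $M$.
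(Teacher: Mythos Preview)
Your proposal is correct and follows exactly the same route as the paper: the paper's argument, spelled out in the paragraphs immediately preceding the proposition, contracts the first Newmark equation with $\mu(q_{k})$ to solve explicitly for $\lambda_{k}$ and hence $q_{k+1}$, then contracts the second equation with $\mu(q_{k+1})$ to obtain $\lambda'_{k+1}$ and thus $v_{k+1}$. Your version is in fact slightly more careful, treating the case of several constraints via the matrix $(\mathcal{C}^{ab})$ and noting that $v_{k+1}\in\D_{q_{k+1}}$ is automatic from the choice of $\lambda'_{k+1}$.
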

	\begin{remark}
		In fact Proposition \ref{explict-00} is more general and can be trivially generalized for Lagrangians of mechanical type
		\[
		L(q, \dot{q})=\frac{1}{2}\dot{q} M(q)\dot{q}^T-V(q)
		\]
		where $M(q)$ is a positive definite matrix for all $q\in Q$. 
	\end{remark}

\begin{remark}
	We have introduced in Sections \ref{NhN_section} and \ref{F_betabetaalpha_section} first and second-order nonholonomic Newmark methods depending on the concrete values $(\beta, \beta')$ and the discretization of the  exact discrete  constraint space. 
	As we have seen in Propositions \ref{compo:old} and \ref{compo} we can design new methods preserving the nonholonomic constraints using  the idea of composing methods. In the same way,  we can  produce higher-order nonholonomic integrators using nonholonomic Newmark methods as building blocks  (see \cite{blanes,hairer,Yoshida}) . 
	
	For instance, considering the second-order nonholonomic Newmark method $F_h^{0,0,1/2}: {\mathcal D}\rightarrow {\mathcal D}$ and using the triple jump \cite{hairer}, we obtain a fourth order method $\Psi_h: {\mathcal D}\rightarrow {\mathcal D}$ given by: 
	\[
	\Psi_h=F_{\gamma_1h}^{0,0,1/2}\circ F_{\gamma_2h}^{0,0,1/2}\circ F_{\gamma_1h}^{0,0,1/2}
	\]
	where
	\[
	\gamma_1 = \frac{1}{2-2^{1/3}}, \quad  
	\gamma_2= -\frac{2^{1/3}}{(2-2^{1/3})}\,  .
	\]
	Using similar constructions, we can derive higher-order methods for nonholonomic mechanics with order 6, 8, etc. Thus, other choices of the parameters produce different higher-order numerical methods (see, for instance, \cite{Mclachan}). 
\end{remark}

\section{Examples of nonholonomic systems}

\subsection{Nonholonomic particle}

Consider a particle moving in $Q=\mathbb{R}^3$, with coordinates $q=(x,y,z)$ and velocities $v=( \dot x, \dot y, \dot z)$. The Lagrangian function is given by
\begin{equation*}
L(q,v)= \frac{1}{2}(\dot{x}^{2} + \dot{y}^{2} + \dot{z}^{2})
\end{equation*}
and the motion is subjected to the constraint $\langle\mu(q),v\rangle = \dot{z} - y \dot{x}=0$. We may identify $\mu(q)$ with the vector $(-y,0,1)$. Writing down \eqref{Con-2}, we obtain 
\[\Gamma_{nh}(q,v,\lambda) = \lambda \mu(q).\]

For $\alpha=1/2$ and $\beta=\beta'=0$, the nonholonomic Newmark integrator $F_{h}^{0, 0, 1/2}$ has the following form:
\begin{equation*}
	\begin{cases}
		q_{k+1} = q_k+h v_k+\frac{h^2}{2}\lambda_{k} \mu(q_{k}) \\
		q_k=q_{k+1}-h v_{k+1}+\frac{h^2}{2}\lambda'_{k+1}\mu(q_{k+1}) \\
		z_{k+1}-z_{k}-\frac{y_{k}+y_{k+1}}{2}(x_{k+1}-x_{k})=0 \\
		\dot{z}_{k+1}-y_{k+1}\dot{x}_{k+1}=0.
	\end{cases}
\end{equation*}


Alternatively, the composition of nonholonomic Newmark methods  $F_{h/2}^{0,0,0}$ with $F_{h/2}^{0,0,1}$ as in \eqref{composition:Newmark}, gives the following integrator for the nonholonomic particle:
\begin{equation}\label{nhp:composition:a}
	\begin{cases}
		x_{k+1/2} = x_{k} + \frac{h}{2} \dot{x}_{k} \\
		y_{k+1/2} = y_{k} + \frac{h}{2} \dot{y}_{k} \\
		z_{k+1/2} = z_{k}+\frac{h}{2} \dot{z}_{k}
		\end{cases}
		\quad
	\begin{cases}
		\dot{x}_{k+1/2} = \dot{x}_{k} + (\dot{z}_{k}-y_{k+1/2}\dot{x}_{k})\frac{y_{k+1/2}}{1+y_{k+1/2}^{2}} \\
		\dot{y}_{k+1/2} = \dot{y}_{k} \\
		\dot{z}_{k+1/2} = \dot{z}_{k} - \frac{\dot{z}_{k}-y_{k+1/2}\dot{x}_{k}}{1+y_{k+1/2}^{2}}
\end{cases}
\end{equation}
and
\begin{equation}\label{nhp:composition:b}
	\begin{cases}
		x_{k+1} = \frac{(x_{k+1/2}+\frac{h}{2}\dot{x}_{k+1/2})(y_{k+1/2}^2+1)}{\frac{h}{2}\dot{y}_{k+1/2}y_{k+1/2}+y_{k+1/2}^2+1} \\
		y_{k+1} = y_{k+1/2} + \frac{h}{2} \dot{y}_{k+1} \\
		z_{k+1} = z_{k+1/2} + y_{k+1}(	x_{k+1} - x_{k+1/2})
	\end{cases}
	\quad
	\begin{cases}
		\dot{x}_{k+1} = \frac{\dot{x}_{k+1/2}(y_{k+1/2}^2+1)}{\frac{h}{2}\dot{y}_{k+1/2}y_{k+1/2}+y_{k+1/2}^2+1} \\
		\dot{y}_{k+1} = \dot{y}_{k+1/2} \\
		\dot{z}_{k+1} = y_{k+1}\dot{x}_{k+1}
	\end{cases}
\end{equation}

\subsection{Chaotic nonholonomic particle}

In this example, we study a particle moving on the configuration space $Q=\R^{5}$ with coordinates $q=(x, y_{1}, y_{2}, z_{1}, z_{2})$ and described by the mechanical Lagrangian function \cite{perlmutter06}:
\begin{equation*}
	L(q,\dot{q})= \frac{1}{2}\|\dot{q}\|^{2} - \frac{1}{2}(\|q\|^{2} + z_{1}^{2}z_{2}^{2} + y_{1}^{2}z_{1}^{2} + y_{2}^{2}z_{2}^{2}),
\end{equation*}
where $\|\cdot \|$ denotes the euclidean norm, and subjected to the single constraint $\dot{x} + y_{1} \dot{z}_{1} + y_{2} \dot{z}_{2}=0$ (see~\cite{perlmutter06}).

The motion of the chaotic particle is given by the system of differential equations
\begin{equation*}
	\begin{cases}
		\ddot{x} = -x + \lambda \\
		\ddot{y}_{1} = -y_{1} - y_{1}z_{1}^{2} \\
		\ddot{y}_{2} = -y_{2} - y_{2}z_{2}^{2} \\
		\ddot{z}_{1} = -z_{1} -z_{1}z_{2}^{2} - y_{1}^{2}z_{1} + \lambda y_{1} \\
		\ddot{z}_{2} = -z_{2} -z_{1}^{2}z_{2} - y_{2}^{2}z_{2} + \lambda y_{2} \\
		\dot{x} + y_{1} \dot{z}_{1} + y_{2} \dot{z}_{2}=0.
	\end{cases}
\end{equation*}

For $\alpha=1/2$ and $\beta=\beta'=0$, the nonholonomic Newmark integrator $F_{h}^{0,0,1/2}$ has the following form:
\begin{equation*}
	\begin{cases}
		x_{k+1} = x_{k}+h \dot{x}_{k} + \frac{h^2}{2} (-x_{k} + \lambda_{k}) \\
		y_{1,k+1} = y_{1,k} + h \dot{y}_{1,k} - \frac{h^2}{2} (y_{1,k} + y_{1, k}z_{1, k}^{2}) \\
		y_{2,k+1} = y_{2,k} + h \dot{y}_{2,k} - \frac{h^2}{2} (y_{2,k} + y_{2, k}z_{2, k}^{2}) \\
		z_{1,k+1} = z_{1, k} + h \dot{z}_{1, k} + \frac{h^2}{2} (\lambda_{k}y_{1, k} - z_{1, k}z_{2, k}^{2} - y_{1, k}^{2} z_{1, k} - z_{1, k}) \\
		z_{2,k+1} = z_{2, k} + h \dot{z}_{2, k} + \frac{h^2}{2} (\lambda_{k}y_{2, k} - z_{1, k}^{2}z_{2, k} - y_{2, k}^{2} z_{2, k} - z_{2, k}) \\
		x_{k} = x_{k+1} - h \dot{x}_{k+1} + \frac{h^2}{2} (-x_{k+1} + \lambda'_{k+1}) \\
		y_{1, k} = y_{1, k+1} - h \dot{y}_{1, k+1} - \frac{h^2}{2} (y_{1,k+1} + y_{1, k+1}z_{1, k+1}^{2}) \\
		y_{2,k} = y_{2,k+1} - h \dot{y}_{2,k+1} - \frac{h^2}{2} (y_{2,k+1} + y_{2, k+1}z_{2, k+1}^{2}) \\
		z_{1,k} = z_{1, k+1} - h \dot{z}_{1, k+1} + \frac{h^2}{2} (\lambda'_{k+1}y_{1, k+1} - z_{1, k+1}z_{2, k+1}^{2} - y_{1, k+1}^{2} z_{1, k+1} - z_{1, k+1}) \\
		z_{2,k} = z_{2, k+1} - h \dot{z}_{2, k+1} + \frac{h^2}{2} (\lambda'_{k+1}y_{2, k+1} - z_{1, k+1}^{2}z_{2, k+1} - y_{2, k+1}^{2} z_{2, k+1} - z_{2, k+1}) \\
		x_{k+1}-x_{k}+\frac{y_{1,k}+y_{1,k+1}}{2}(z_{1,k+1}-z_{1, k}) + \frac{y_{2,k}+y_{2,k+1}}{2}(z_{2,k+1}-z_{2, k})=0 \\
		\dot{x}_{k+1} + y_{1,k+1} \dot{z}_{1,k+1} + y_{2,k+1} \dot{z}_{2,k+1}=0.
\end{cases}
\end{equation*}

\subsection{Pendulum-driven CVT system}

This example in $Q=\R^{3}$ is a nonholonomic continuous variable transmission (CVT) system determined by an independent Hamiltonian subsystem called the driver system \cite{modin}. We will denote the coordinates in $\R^3$ by $(x,y,\xi)$ and, then, the Lagrangian function is
$$L(x,y,\xi,\dot{x},\dot{y},\dot{\xi})= \frac{1}{2}\left(\sum_{i=1}^{2} \dot{q}_{i}^{2} + \kappa_{i}q_{i}^{2}  \right) + l(\xi, \dot{\xi}),$$
where $(q_{1},q_{2},\dot{q}_{1},\dot{q}_{2})=(x, y, \dot{x}, \dot{y})$ and $l(\xi, \dot{\xi}) = \frac{1}{2}\dot{\xi}^{2} - V(\xi)$. The nonholonomic constraint is of the form
$$\dot{y} + f(\xi)\dot{x}=0.$$

The motion of this family of systems is given by the equations
\begin{equation*}
	\begin{cases}
		\ddot{x} = \kappa_{1}x + \lambda f(\xi) \\
		\ddot{y} = \kappa_{2}y + \lambda \\
		\ddot{\xi} = - V'(\xi) \\
		\dot{y} + f(\xi)\dot{x}=0
	\end{cases}
\end{equation*}
where the Lagrange multiplier may be computed to be of the form
\begin{equation*}
	\lambda = - \frac{f'(\xi)\dot{\xi}\dot{x} + \kappa_{1} f(\xi)x + \kappa_{2}y}{1 + f^{2} (\xi)}.
\end{equation*}

From now on, consider the following potential and constraint functions and constants
\begin{equation*}
	V(\xi) = \cos(\xi) - \frac{ \epsilon \sin(2 \xi)}{2}, \quad f(\xi)=\sin(\xi), \quad \kappa_{1}=\kappa_{2}=-1.
\end{equation*}
This example has the property that for $\epsilon \neq 0$, the system is no longer integrable reversible and so, good long time behaviour observed in most nonholonomic integrators is lost in this case {(see \cite{modin})}.

For $\alpha=1/2$ and $\beta=\beta'=0$, the Newmark nonholonomic integrator $F_{h}^{0,0,1/2}$ has the following form
\begin{equation*}
\begin{cases}
	x_{k+1} = x_{k}+h \dot{x}_{k} + \frac{h^2}{2}( -x_{k} + \sin(\xi_{k}) \lambda_{k}) \\
	y_{k+1} = y_{k} + h \dot{y}_{k} + \frac{h^2}{2}( -y_{k} + \lambda_{k}) \\
	\xi_{k+1} = \xi_{k} + h \dot{\xi}_{k} + \frac{h^2}{2} (\sin(\xi_{k})+\epsilon \cos(2 \xi_{k})) \\
	x_{k} = x_{k+1} - h \dot{x}_{k+1} + \frac{h^2}{2}( -x_{k+1} + \sin(\xi_{k+1}) \lambda'_{k+1}) \\
	y_{k} = y_{k+1} - h \dot{y}_{k+1} + \frac{h^2}{2} (-y_{k+1} + \lambda'_{k+1}) \\
	\xi_{k} = \xi_{k+1} - h \dot{\xi}_{k+1} + \frac{h^2}{2} (\sin(\xi_{k+1})+\epsilon \cos(2 \xi_{k+1})) \\
	y_{k+1}-y_{k} + \sin\left(\frac{\xi_{k} + \xi_{k+1}}{2}\right)(x_{k+1}-x_{k})=0 \\
	\dot{y}_{k+1}+\sin(\xi_{k+1})\dot{x}_{k+1}=0.
\end{cases}
\end{equation*}

This system of difference equations has also an explicit solution but we omit it here since it is too long to be meaningful.

\section{Numerical results}\label{numerical_results}

\subsection{Nonholonomic particle}
Figure~\ref{nhparticle_energy_drift} shows the energy drift for the nonholonomic Newmark method with $\beta=\beta'=0$ (which is  equivalent to the DLA method by Proposition~\ref{prop:Newmark0isDLA}), a Runge-Kutta 4th order method and the composition method $\Psi_h$ in Proposition~\ref{compo}. Here $T=100$, $h=.2$, and the initial conditions are $q_0=(1,1,-1)$, $v_0=(1,-1,1)$  with energy $1.5$. In this example, as well as in all the following ones, we used $\alpha=1/2$, except of course for $\Psi_h$. 

We can see that in this simple system, the Runge-Kutta method shows a better energy behaviour than the other two.

\begin{figure}[htb!]
  \begin{center}
    \includegraphics[scale=.5]{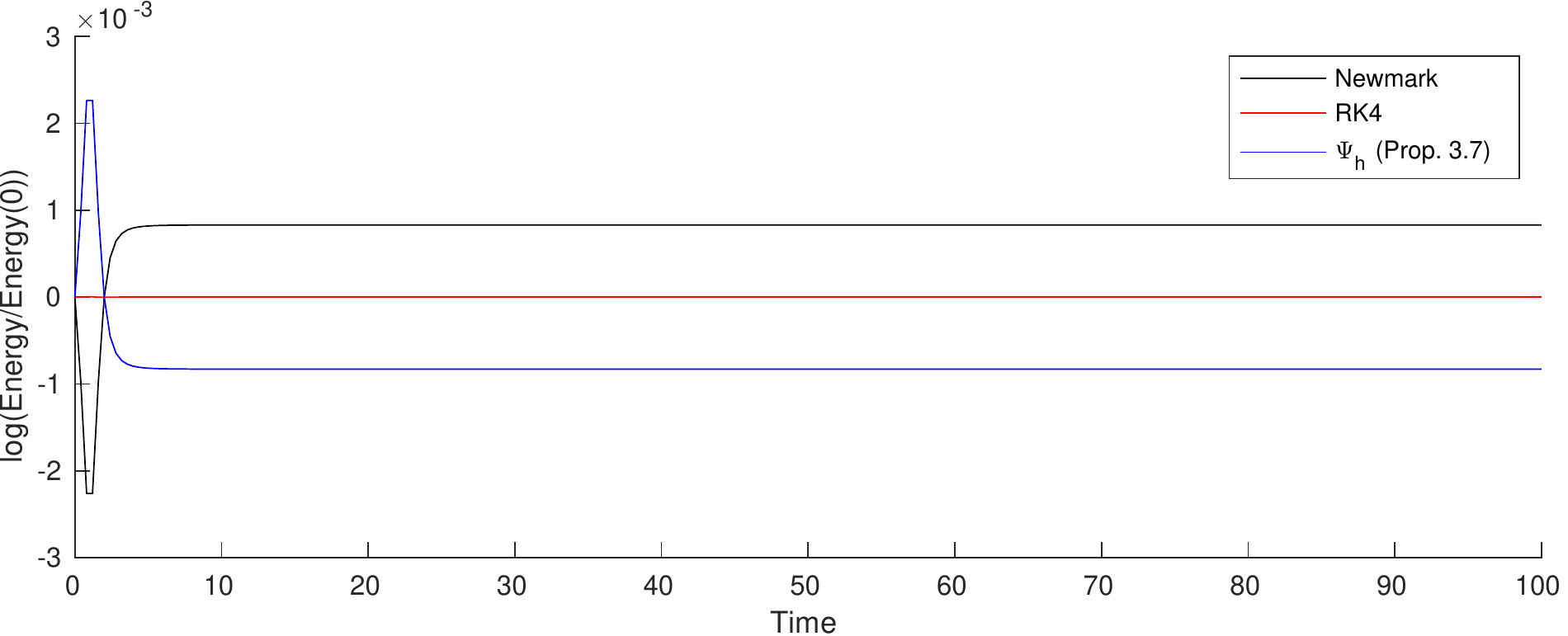}
    \caption{Energy drift for the nonholonomic particle. The line for RK4 is not exactly zero, but it settles around $5\cdot 10^{-7}$.}
    \label{nhparticle_energy_drift}
  \end{center}
\end{figure}

\subsection{Chaotic nonholonomic particle}
Figure~\ref{chaotic_energy_drift} shows the energy drift for the nonholonomic Newmark method with $\beta=\beta'=0$ (DLA), the nonholonomic Newmark method with $\beta=\beta'=.1$, a Runge-Kutta 4th order method and the composition method $\Psi_h$ in Proposition~\ref{compo}. Here we used $T=1000$, $h=.2$, and initial conditions $q_0=(1,0,1,-1,-1)$, $v_0=(0.05,   0.5,   -0.5,   -0.1,   -0.05)$, with energy $3.2575$ approximately.

For this example, the methods we propose here outperform Runge-Kutta in energy behaviour.

\begin{figure}[htb!]
  \begin{center}
    \includegraphics[scale=.5]{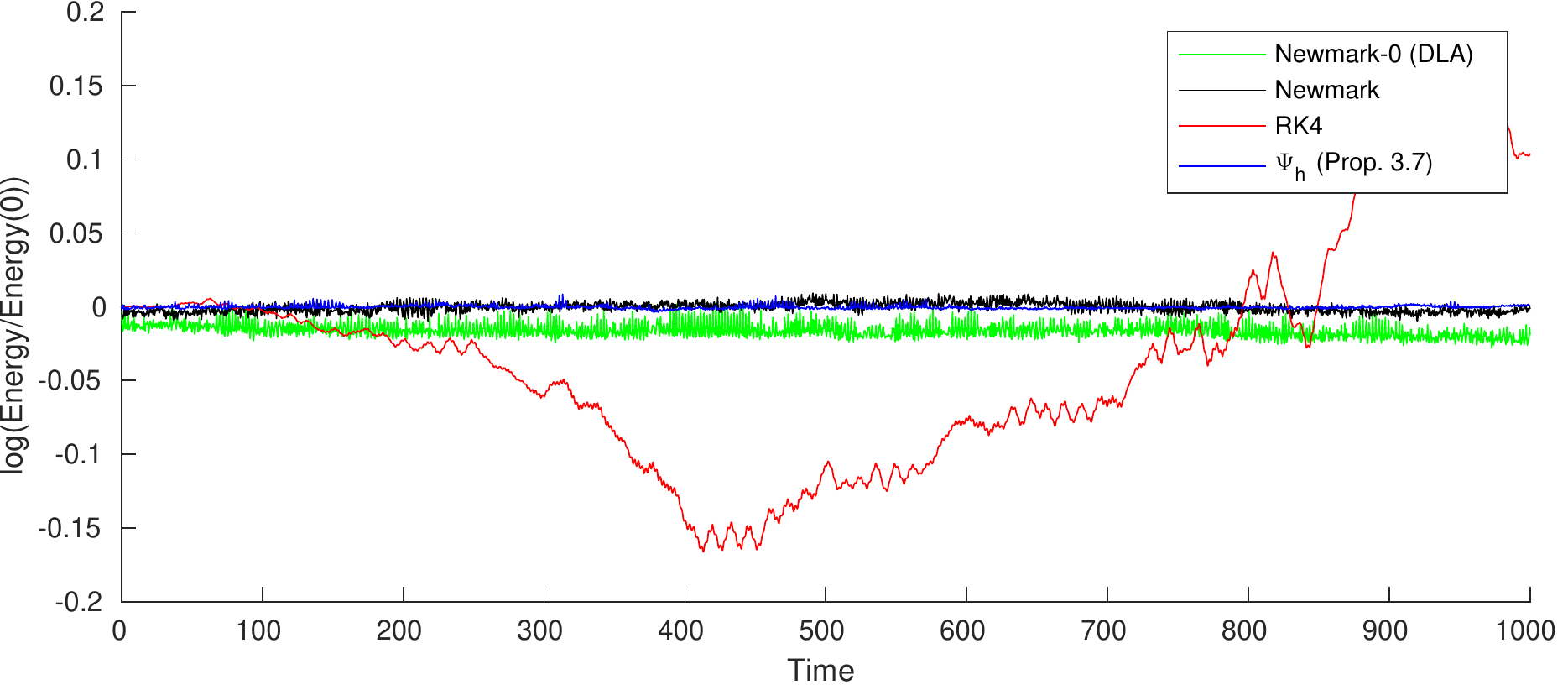}
    \caption{Energy drift for the chaotic nonholonomic particle.}
    \label{chaotic_energy_drift}
  \end{center}
\end{figure}

We also explore 100 random initial conditions for this example, all of them having the same energy value of $1.535$. We used $\beta=\beta'=.1$, $T=10000$ and $h=.2$. The method used is the composition method in Proposition~\ref{compo}. In Figure~\ref{figmanytrajectories} we plot the energy drift for each trajectory and the variance of the energy drift as in \cite{perlmutter06}.

\begin{figure}[htb!]
  \begin{center}
    \includegraphics[scale=.49]{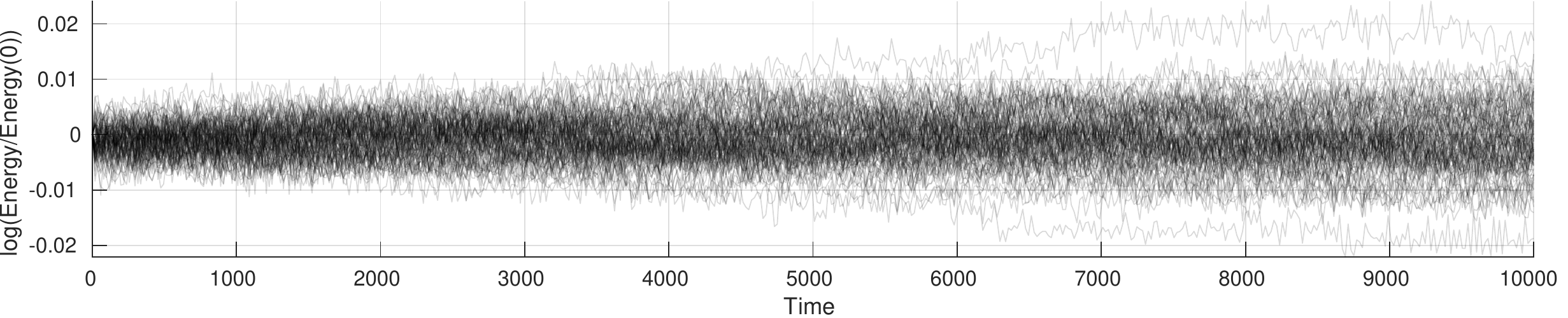}
    \includegraphics[scale=.49]{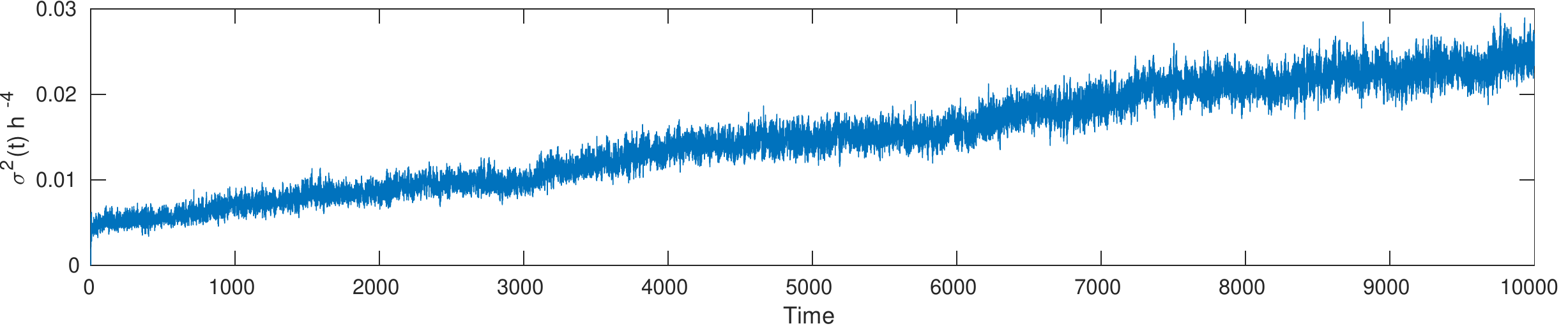}
    \caption{Energy drift and variance for 100 random trajectories of the chaotic nonholonomic particle, with $\beta=\beta'=.1$.}
    \label{figmanytrajectories}
  \end{center}
\end{figure}

\subsection{Pendulum-driven CVT}
We first consider the case  $\epsilon=0$. In Figure~\ref{CVT0} we show the energy drift for the nonholonomic Newmark method with $\beta=\beta'=0$, and with $\beta=\beta'=.1$, 4th-order Runge-Kutta, and the composition method $\Psi_h$. Here $T=400$, $h=.2$, and the initial conditions are $q_0=(1,0,-2)$, $v_0\approx (-0.4481,-0.4075,0.1)$, with an approximate energy of $0.2723$.
\begin{figure}[htb!]
	\begin{center}
		\includegraphics[scale=.5]{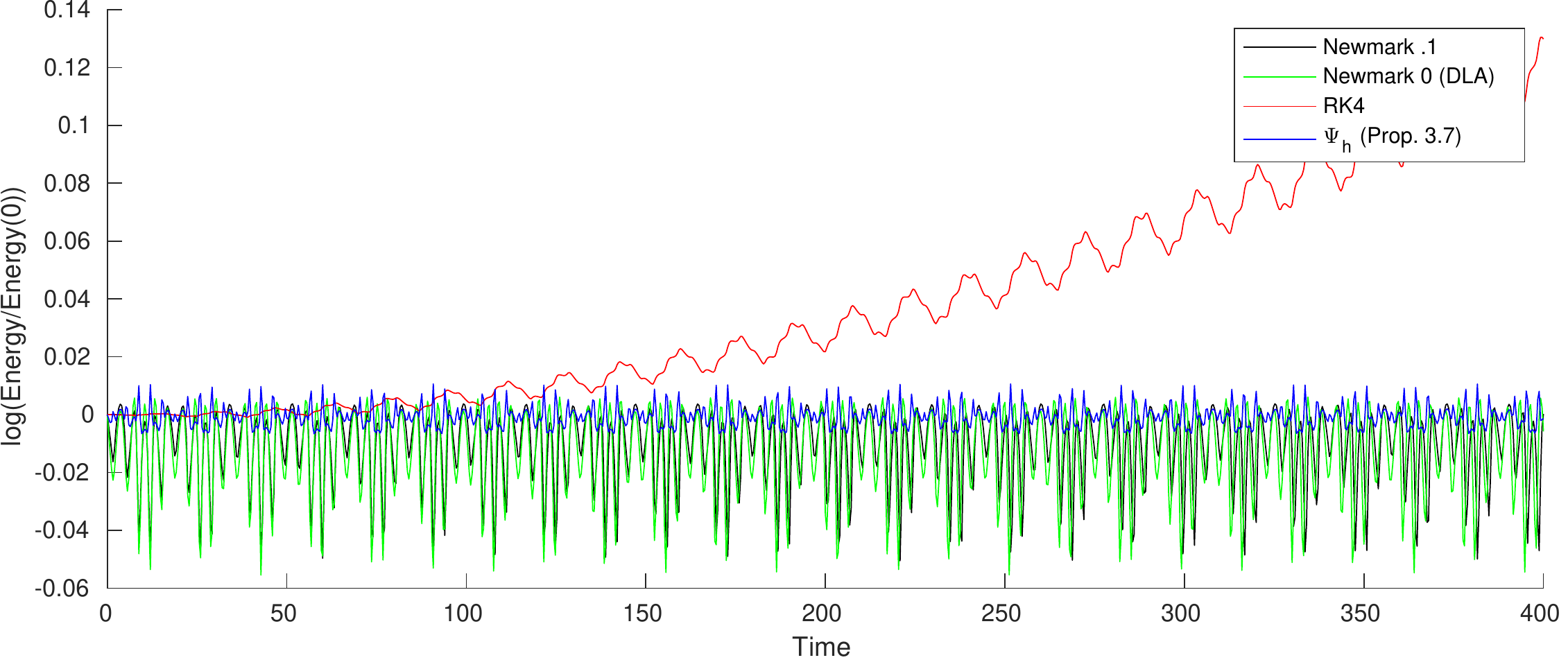}
		\caption{Energy drift for the pendulum-driven CVT, $\epsilon=0$.}
		\label{CVT0}
	\end{center}
\end{figure}

For the case $\epsilon=0.1$ in Figure~\ref{CVT1}, we compare the nonholonomic Newmark method with $\beta=\beta'=0$ and the composition methods $\Psi_h$ and $\Psi_{2h}$. The latter was included because the computational cost for each step of $\Psi_h$ is twice that of the nonholonomic Newmark method; therefore $\Psi_{2h}$ has a global computational cost comparable to the nonholonomic Newmark method. Here $h=.05$, $T=1500$, and the initial conditions are the same as the ones used in \cite{modin}, which are $q_0=(1,1,0)$, $v_0\approx (0,0,2.82842712)$, now the energy being exactly $6.0$.

\begin{figure}[htb!]
	\begin{center}
		\includegraphics[scale=.5]{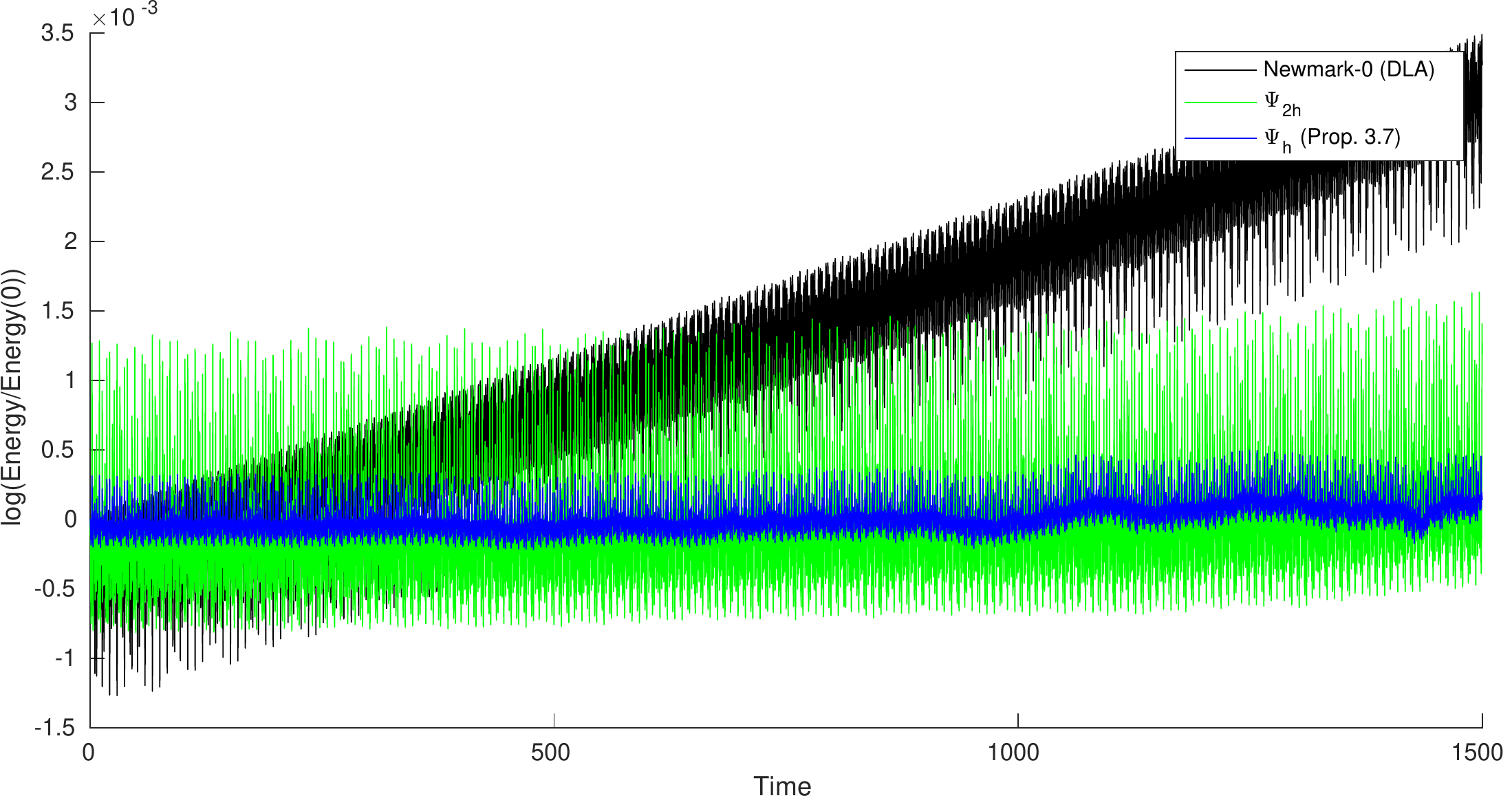}
		\caption{Energy drift for the pendulum-driven CVT, $\epsilon=0.1$.}
		\label{CVT1}
	\end{center}
\end{figure}

As expected, we observe that the Newmark method, being equivalent to DLA method, is no longer able to preserve energy as it had been already pointed out in \cite{modin}. However, the composition of the two Newmark methods in \eqref{composition:Newmark} shows nearly preservation of energy. At the moment, we have no explanation for this good behaviour.

\section{Future work}

In a future paper, we will study the extension of the nonholonomic Newmark method to non-linear spaces, that is, in general differentiable manifolds. In particular, if $Q=G$ is a Lie group we can derive a nonholonomic Lie-Newmark method (see \cite{Krysl}) where we assume that we have a retraction map $R: \mathfrak{g}\rightarrow G $ (for instance the Lie group exponential map) and we identify by left (right)-trivialization $TG\equiv G\times{\mathfrak g}$ with left (respectively, right)-trivialized coordinates $(g, \xi)$. Therefore if  $g_k\in G$ and $\xi_k\in g_k^{-1}{\mathcal D}_{g_k}\subseteq  {\mathfrak g}$ then
\begin{align*}
	g_k^{-1}g_{k+1}&=R( h\xi_k+\frac{h^2}{2}\Gamma_{nh}(g_k, \xi_k, \lambda_k))\\
	\frac{\xi_{k+1}-\xi_k}{h}&=\frac{1}{2}\Gamma_{nh}(g_k, \xi_k, \lambda_k)+\frac{1}{2}\Gamma_{nh}(g_{k+1}, \xi_{k+1}, \lambda'_{k+1})\\
	g_{k+1}&\in {\mathcal M}^d_{g_k,h}\\
	\xi_{k+1}&\in g_{k+1}^{-1}{\mathcal D}_{g_{k+1}} 
\end{align*}
where we have also identified $TTG\equiv G\times {\mathfrak g}\times  {\mathfrak g}\times  {\mathfrak g}$ and then $\Gamma_{nh}(g_k, \xi_k, \lambda_k)\in {\mathfrak g}$ and ${\mathcal M}^d_{g_k,h}$ is a discretization of the exact discrete constraint space. 

\section{Acknowledgements}
A. Anahory Simoes and D. Mart{\'\i}n de Diego acknowledge financial support from the Spanish Ministry of Science and Innovation, under grant PID2019-106715GB-C21 and  the ``Severo Ochoa Programme for Centres of Excellence'' in R\&D from CSIC (CEX2019-000904-S).
A. Anahory Simoes is also supported by a 2020 Leonardo Grant
for Researchers and Cultural Creators, BBVA Foundation.
 S.\ Ferraro acknowledges financial support from PICT 2019-00196, FONCyT, Argentina, and PGI 2018, UNS.  Juan Carlos Marrero acknowledges financial support from
the Spanish Ministry of Science and Innovation under grant PGC2018-098265-B-C32.

\bibliography{references}

\end{document}